\theoremstyle{plain}
\newtheorem{thm}{Theorem}[section]
\newtheorem*{syuA}{\textbf{Main Theorem A}}
\newtheorem*{syuB}{\textbf{Main Theorem B}}
\newtheorem{lem}[thm]{Lemma}
\newtheorem{cor}[thm]{Corollary}
\newtheorem{pro}[thm]{Proposition}
\theoremstyle{definition}
\newtheorem{df}[thm]{Definition}
\newtheorem{rem}[thm]{Remark}
\newtheorem{ex}[thm]{Example}
\newtheorem*{prf*}{Proof}
\newtheorem*{pf*}{}
\newtheorem*{conne*}{Argument for the connectedness}
\newtheorem*{lconne*}{Argument for the local connectedness}
\newtheorem*{lem*}{LemmaA}
\newtheorem*{lm*}{LemmaB}
\newtheorem*{stra*}{Strategy for the proof of main result A}
\def\g2{l\ge2}
\def\la{\lambda}
\def\mn{\mathcal{M}_n}
\def\on{\Omega_n}
\title{Mandelbrot set for fractal $n$-gons and zeros of power series
}
\author{Yuto Nakajima\\ Keio Institute of Pure and Applied Sciences (KiPAS),\\ Department of Mathematics,
Keio University, Yokohama,
223-8522, JAPAN\\
nakajimayuto@math.keio.ac.jp
}
\date{}
\begin{document}
\maketitle
\begin{abstract}
We give a framework to study the connectedness of the set of zeros of power series with coefficients in a finite subset $G\subset \mathbb{C}$. We prove that the set of zeros in the unit disk is connected and locally connected if some graph on the set $G$ of coefficients is connected. Furthermore, we apply this result to the study of the Mandelbrot set $\mathcal{M}_n$ for fractal $n$-gons. We prove that $\mathcal{M}_n$ is connected and locally connected for any $n$.
\end{abstract}

\section{Introduction}
\subsection{Background}
In 1985, Barnsley and Harrington \cite{BH} introduced 
the connectedness locus $\mathcal{M}_2$ for a pair of linear maps as an analog of the Mandelbrot set for quadratic polynomials, that is,  
\begin{align*}
\mathcal{M}_2=\{\la\in \mathbb{D}^{\times}\ :\ A_2(\la)\ \mbox{is connected} \},
\end{align*}
where $\mathbb{D}^{\times}:=\{\lambda\in \mathbb{C}\ :\ 0< |\lambda|< 1\}$ and the set $A_2(\la)$ is the attractor of the iterated function system $\{z\mapsto \la z+1, z\mapsto \la z-1\}$ on the set $\mathbb{C}$ of complex numbers. For the general theory of the iterated function system, see \cite{Fal}. 
Barnsley and Harrington \cite{BH} proved that there is a neighborhood $U$ of the set $\{0.5, -0.5\}$ such that $U\cap \mathcal{M}_2\subset\mathbb{R}$. Furthermore, they conjectured that there is a non-trivial hole in  $\mathcal{M}_2,$ which was confirmed by Bandt \cite{Ban1} in 2002. Bandt \cite{Ban1} also conjectured that the interior of $\mathcal{M}_2$ is dense away from $\mathcal{M}_2\cap \mathbb{R}$, that is, ${\rm cl}\bigr({\rm int}(\mathcal{M}_2)\bigl)\cup (\mathcal{M}_2\cap \mathbb{R})=\mathcal{M}_2$. Here, for a set $A\subset{\mathbb{C}}$, we denote by ${\rm cl}(A)$ and ${\rm int}(A)$ the closure of $A$ and the interior of $A$ with respect to the Euclidean topology on $\mathbb{C}$ respectively. Solomyak and Xu \cite{SX} made partial progress on Bandt's conjecture.

In 2008, Bandt and Hung \cite{BanHu} introduced self-similar sets parameterized by $\la\in \mathbb{D}^{\times}$ which are called ``{\em fractal $n$-gons}'' for $n\in \mathbb{N}$ with $n\ge 2$. We give the rigorous definition of ``{\em fractal $n$-gons}'' in the next sub-section (see Definition \ref{ngon}). They studied the connectedness locus for ``{\em fractal $n$-gons}'', that is, 
\begin{align*}
\mathcal{M}_n=\{\la\in \mathbb{D}^{\times}\ :\ A_n(\la)\ \mbox{is connected} \},
\end{align*}
where $A_n(\la)$ is the ``{\em fractal $n$-gon}'' corresponding to the parameter $\la$. Note that ``{\em fractal $2$-gons}'' are attractors of the iterated function systems $\{z\mapsto \la z+1, z\mapsto \la z-1\}$ and $\mathcal{M}_2$ is the connectedness locus for ``{\em fractal $2$-gons}''. Bandt and Hung \cite{BanHu} discovered many remarkable properties about $\mathcal{M}_n$, including the following result. For each $n\ge 3$ with $n\neq 4$, $\mn$ is regular-closed, that is, ${\rm cl}\bigr({\rm int}(\mathcal{M}_n)\bigl)=\mathcal{M}_n$. In 2016, Calegari, Koch and Walker \cite{CKW} introduced new methods for constructing interior points and positively answered Bandt's conjecture, that is, ${\rm cl}\bigr({\rm int}(\mathcal{M}_2)\bigl)\cup (\mathcal{M}_2\cap \mathbb{R})=\mathcal{M}_2$. Himeki and Ishii \cite{HI} proved $\mathcal{M}_4$ is regular-closed. Thus the problems about the regular-closedness of $\mn$ have been completely solved. 

We now consider the connectedness of $\mn$. Bousch \cite{Bou, Bou2} proved that $\mathcal{M}_2$ is connected and locally connected. This is interesting since for the case of quadratic polynomials, the local connectedness of the Mandelbrot set is still an open problem. Furthermore, Bousch \cite{Bou3} proved that $\mn$ is connected for any $n\ge 3$. In this paper, we obtain somewhat stronger results by proving the local connectedness of $\mn$ for any $n\ge 3$. In order to prove that, we consider a general framework to study the connectedness of the set of zeros of power series. The novelty of our framework is to
introduce a graph on the set of possible coefficients, ensuring that the corresponding set of zeros is connected and locally connected.
\subsection{Main results}
Bandt and Hung \cite{BanHu} introduced {\em fractal $n$-gons} and their  {\em Mandelbrot set} as follows.
\begin{df}[Fractal $n$-gons]
\label{ngon}
Let $\mathbb{D}^{\times}:=\{\lambda\in \mathbb{C}\ :\ 0< |\lambda|< 1\}$ be the parameter space. Fix a parameter $\lambda\in \mathbb{D}^{\times}$ and a natural number $n$ with $n\ge 2.$  We set ${\xi_n}=\exp(2\pi\sqrt{-1} /n)$. For each $i\in \{0,1,...,n-1\}$, we define $\phi^{n,\lambda}_i: \mathbb{C}\rightarrow \mathbb{C}$ by $\phi^{n,\lambda}_i(z)=\lambda z+{\xi_n}^i$. Then for the iterated function system $\{\phi^{n,\lambda}_0,..., \phi^{n,\lambda}_{n-1}\}$, there uniquely exists a non-empty compact subset $A_n(\lambda)$ such that $$\bigcup_{i=0}^{n-1}\phi^{n,\lambda}_i(A_n(\lambda))=A_n(\lambda)$$(see \cite{Fal, Hut}). We call $A_n(\lambda)$ a fractal $n$-gon corresponding to the parameter $\la$.
\end{df}
For each $n\in \mathbb{N}$ with $n\ge 2$, we define the Mandelbrot set $\mathcal{M}_n$ for fractal $n$-gons as 
\begin{align*}
\mathcal{M}_n=\{\la\in \mathbb{D}^{\times}\ :\ A_n(\la)\ \mbox{is connected} \}.
\end{align*}
Then the following theorem holds.
\begin{syuA}
For any $n\ge 2$, $\mn$ is connected and locally connected.
\end{syuA}
Main Theorem A overlaps with the work of Bousch \cite{Bou3, Bou2}. However, in case $n\ge 3$ the local connectedness of $\mn$ is a new result. 

We can identify $\mn$ with the set of zeros of some power series (see \cite[Remark 3]{BanHu}). Hence  in order to prove Main Theorem A we give the following setting, which provides a framework to study the connectedness of the set of zeros of power series.


Let $G$ be a non-empty finite subset of $\mathbb{C}$. We set $\triangle G:=\{a-b\in \mathbb{C}\ :\ a, b\in G\}.$ For $a\in \mathbb{C},$ we set $a G:=\{ab\in \mathbb{C}\ :\ b\in G\}.$ Moreover, we set 
$$R_G=\{a\in \mathbb{C}\ :\ aG\subset \triangle G\}.$$
We define a reflexive and symmetric relation $\mathcal{R}_G$ over $G$ as $a\mathcal{R}_Gb$ if and only if $a-b\in R_{G}$ for $a,b \in G.$ We now define an undirected graph on $G$ as follows.
\begin{df}
\label{graph}
Let $G$ be a non-empty finite subset of $\mathbb{C}.$ Let $(G, E_G)$ be the finite and undirected graph with the vertex set $G$ and the edge set $E_G$ of unordered pairs $(a, b)$ of elements of $G$ satisfying $a\mathcal{R}_Gb.$
\end{df} 
Below, we write $(G, R_G)$ for the graph $(G, E_G)$ for the emphasis on the set $R_G$. Moreover, we define the {\em connectedness} of the graph $(G, R_G)$ as follows.
\begin{df}
\label{graph1}
We say that the graph $(G, R_G)$ is {\em connected} if for any $a, b\in G$ with $a\neq b$ there exist $c_1,..., c_k\in G$ satisfying the following:
\begin{itemize}
\item[(i)] $c_1=a, c_k=b;$ 
\item[(ii)]the graph $(G, R_G)$ has an edge between $c_i$ and $c_{i+1},$ that is, $c_i-c_{i+1}\in R_G$ for any $i=1,..., k-1.$
\end{itemize}
\end{df} 
For each non-empty finite subset $G$ of $\mathbb{C}$, we define a set $P^{G}$ of functions on the open unit disk $\mathbb{D}=\{z\in \mathbb{C}\ :\ |z|< 1\}$ and the set $X^{G}$ of zeros of functions which belong to $P^{G}$ as follows.
\begin{df}
\label{poly}
\begin{align*}
&P^{G}=\left\{1+\sum_{i=1}^{\infty}a_iz^i\ :\ a_i\in G\ \text{for any}\ i=1, 2,... \right\},\\
&X^{G}=\{z\in \mathbb{D}\ :\ \mbox{there exists}\ {f\in P^{G}}\ \mbox{such that}\ f(z)=0\}.
\end{align*}
\end{df}

Then the following theorem holds, which is the second main result in this paper.
\begin{syuB}
Let $G\subset \mathbb{C}$ be a finite subset which contains $1$. Suppose that the graph $(G, R_G)$ is connected. If there exists a real number $L$ with $0< L <1$ such that $\{z\in \mathbb{C}\ :\ L< |z|<1\}\subset X^G,$ then we have $X^G$ is connected and locally connected.
\end{syuB}
\begin{rem}
In the case $G=\{-1, 1\}$ or $G=\{-1, 0, 1\}$, Bousch \cite{Bou, Bou2} proved that $X^{G}$ is connected and locally connected. In the case $G=\{0, 1\}$, Odlyzko and Poonen \cite{OP} proved that $X^{G}$ is path-connected.  
The method of Bousch \cite{Bou2} was inspired by Odlyzko and Poonen \cite{OP} and modified by Bandt \cite[Section 11]{Ban1}. Later, Sirvent and Thuswaldner \cite{ST} developed the method of Bousch \cite{Bou2} by using automata theory in the study of the connectedness locus of some iterated function systems.
For an important variation of $X^G,$ see \cite{SS}.
\end{rem}
The rest of this paper is devoted to proofs of Main Theorems A and B.

\subsection*{Acknowledgement.} 
The author would like to express his gratitude to anonymous referees for their valuable comments. He would like to express his gratitude to Hiroki Sumi and Takayuki Watanabe for their valuable discussions. 

\section{Proof of Main Theorem B}
In this section we give a proof of Main Theorem B. Below we fix a finite subset $G$ of $\mathbb{C}$ satisfying the assumptions of Main Theorem B. Set $G^0=\{\varepsilon\}$ and $G^{\ast}=\cup_{m\ge 0}G^m$, where $\varepsilon$ is the empty word. For any finite word $u=u_1\cdots u_m\in G^{\ast}\backslash{G^{0}}$ we define a subset $P_u^{G}\subset P^G$ as

\begin{align*}
&P_u^{G}=\left\{1+\sum_{i=1}^{\infty}a_iz^i\in P^G\ :\ a_i=u_i\ \text{for any}\ i=1,..., m \right\}.
\end{align*}
For $\varepsilon\in G^0,$ set $P_{\varepsilon}^{G}=P^G$.

We give a key lemma in this paper.
\begin{lem}
\label{keylem}
Let $u\in G^{\ast}.$ If the graph $(G, R_G)$ has an edge between $a$ and $b$, then there exist $p_{ua}\in P^{G}_{ua}$ and $p_{ub}\in P^{G}_{ub}$ such that every zero of $p_{ua}$  in $\mathbb{D}$ is also a zero of $p_{ub}.$ 
\end{lem}
\begin{proof}
Let $m\in \mathbb{N}$ and let $u=u_1\cdots u_m\in G^m.$ Take $a, b\in G$ such that the graph $(G, R_G)$ has an edge between $a$ and $b$.

Since $b-a\in R_G$, we have $(b-a)u_1\in \triangle G$ for $u_1\in G$, that is, for $u_1\in G$ there exists $c^1_1\in G$ such that $$(b-a)u_1+c^1_1\in G.$$ Similarly, for $u_i\in G$ there exists $c^1_i\in G$ such that $$(b-a)u_i+c^1_i\in G,$$ where $i\in\{1, 2,..., m+1\}$ and we set $u_{m+1}=a.$ Inductively, for $c^j_i\in G$ there exists $c^{j+1}_i\in G$ such that 
\begin{align*}
&(b-a)c^j_i+c^{j+1}_i\in G,
\end{align*}
where $i\in\{1, 2,..., m+1\}$ and $j\in \mathbb{N}$. Then we set 
\begin{align*}
p_{ua}:=(1, \underbrace{u_1,..., u_{m}, a}_{m+1}, \underbrace{c^1_1, ..., c^1_{m+1}}_{m+1}, \underbrace{c^2_1,..., c^2_{m+1}}_{m+1},\cdots, \underbrace{c^{j}_1,..., c^{j}_{m+1}}_{m+1}, \cdots)\in P_{ua}^{G}.
\end{align*}
Here, we denote by $(1, a_1, a_2,...)$ the power series $1+\sum_{i=1}^{\infty} a_i z^i$. 
Moreover, we set 
\begin{align*}
p_{ub}:=&\{1+(b-a)z^{m+1}\}p_{ua}\\
=&(\underbrace{1, u_1,..., u_{m}}_{m+1}, a, \underbrace{c^1_1, ..., c^1_{m+1}}_{m+1}, ..., \underbrace{c^{j+1}_1,..., c^{j+1}_{m+1}}_{m+1}, ...)+\\
&(\underbrace{0, 0,..., 0}_{m+1}, (b-a), \underbrace{(b-a)u_1,...,(b-a)a}_{m+1},...,\underbrace{(b-a)c^{j}_1,...,(b-a)c^{j}_{m+1}}_{m+1},... )\\
=&(1, u_1,..., u_{m}, b,..., \underbrace{(b-a)c^{j}_{1}+c^{j+1}_1,..., (b-a)c^{j}_{m+1}+c^{j+1}_{m+1}}_{m+1},...)\in P_{ub}^{G}.\\
\end{align*}
Hence the functions $p_{ua}$ and $p_{ub}$ satisfy the desired properties.
\end{proof}

 

Below, for $\gamma\in G^{\mathbb{N}},$ we denote by $f_{\gamma}$ the power series \[1+\sum_{i=1}^{\infty}\gamma_i z^i\] in $P^G.$

\begin{conne*}

\end{conne*}
We improve the methods in \cite{Bou}. For any $\gamma=\gamma_1\gamma_2\cdots, \delta=\delta_1\delta_2\cdots \in G^{\mathbb{N}},$ we set ${\rm Val}(f_{\gamma}, f_{\delta}):={\rm inf}\{i\in \mathbb{N}\ :\ \gamma_i\neq \delta_i\}.$ 
We set $\mathbb{N}_{\ge 2}:=\{n\in \mathbb{N}\ :\ n\ge 2\}.$

\begin{df}
\label{join}
Let $f, g\in P^G$ with $f\neq g$. 
Let $S=\{p_0, q_0, p_1, q_1, ..., p_{m}, q_{m}\}$ be a sequence of functions which belong to $P^G$. Let $N\in \mathbb{N}_{\ge 2}.$ We say that $S$ is a sequence of functions which joins $f$ to $g$ with respect to $N$ if $S$ satisfies the following:
\begin{enumerate}
\item[(1)]for each $i$, ${\rm Val}(p_i, q_i)\ge N;$
\item[(2)]for each $i$, every zero of $q_i$ in $\mathbb{D}$ is also a zero of $p_{i+1};$
\item[(3)]$p_0=f, q_m=g$.
\end{enumerate}
\end{df}
We identify $(1, a_1, a_2,...)$ with the power series $1+\sum_{i=1}^{\infty} a_i z^i$. Then the following holds.
\begin{lem}
\label{js}
Let $N\in \mathbb{N}_{\ge 2}.$ Then for any $f, g\in P^G$ with $f\neq g,$ there exists a sequence $p_0, q_0, p_1, q_1, ..., p_{m}, q_{m}$ of functions which joins $f$ to $g$ with respect to $N$.
\end{lem}
\begin{proof}
If ${\rm Val}(f, g)\ge N,$ we set $p_0=f$ and $q_0=g.$ Hence we assume ${\rm Val}(f, g)\in \{1,..., N-1\}.$ 

This is done by induction with respect to ${\rm Val}(f, g)\in \{1,..., N-1\}.$ We first prove that the statement holds in the case ${\rm Val}(f, g)=N-1,$ that is, $f, g$ have the following forms.
\begin{align*}
&f:=(1, a_1,..., a_{N-2}, a, *\cdots*),\\
&g:=(1, a_1,..., a_{N-2}, b, *\cdots*),
\end{align*}
where $a, b\in G$ with $a\neq b.$ It suffices to construct a sequence of functions which joins $f$ to $g$ in the case the graph $(G, R_G)$ has an edge between $a$ and $b$ since the graph $(G, R_G)$ is connected.
Then by Lemma \ref{keylem}, there exist $q_0\in P^G_{a_1\cdots a_{N-2}a}$ and $p_1\in P^G_{a_1\cdots a_{N-2}b}$ such that every zero of $q_0$ in $\mathbb{D}$ is also a zero of $p_{1}.$ Since ${\rm Val}(f, q_0)\ge N$ and ${\rm Val}(p_1, g)\ge N,$ we find the sequence $f, q_0, p_1, g$ which joins $f$ to $g$.

Fix $j\in \{1,..., N-2 \}.$ Suppose that the statement holds in the case ${\rm Val}(f, g)> j.$ We now prove that the statement holds in the case ${\rm Val}(f, g)=j.$ We set 
\begin{align*}
&f:=(1, a_1,..., a_{j-1}, a, *\cdots*),\\
&g:=(1, a_1,..., a_{j-1}, b, *\cdots*),
\end{align*}
where $a, b\in G$ with $a\neq b.$ We can assume the graph $(G, R_G)$ has an edge between $a$ and $b$. Then by Lemma \ref{keylem}, there exist $q_0\in P^G_{a_1\cdots a_{j-1}a}$ and $p_1\in P^G_{a_1\cdots a_{j-1}b}$ such that every zero of $q_0$ in $\mathbb{D}$ is also a zero of $p_{1}.$ Since ${\rm Val}(f, q_0)>j$ and ${\rm Val}(p_1, g)>j,$ by induction hypothesis, there exist sequences $S_1$ and $S_2$ of functions which join $f$ to $q_0$ and $p_1$ to $g$ respectively. Hence we find a sequence $S_1,S_2$ of functions which joins $f$ to $g$. Thus we have proved our lemma. 

\end{proof}
Let $\mathcal{O}(\mathbb{D})$ be the set of holomorphic functions on $\mathbb{D}.$ Set $$F:=\left\{(f, s)\in P^G\times {\rm cl}(B(0, L))\ :\ f(s)=0\right\}\subset \mathcal{O}(\mathbb{D})\times \mathbb{D},$$ where $L$ satisfies $\{z\in \mathbb{C}\ :\ L< |z|<1\}\subset X^G.$ Since $P^G$ is a compact subset of $\mathcal{O}(\mathbb{D})$ endowed with the compact open topology, $F$ is a compact subset of $\mathcal{O}(\mathbb{D})\times \mathbb{D}$. 
By using this fact and Rouch\'e's Theorem, we can give the following. 
\begin{lem}
\label{sr}
For any $\epsilon>0$ with $L+\epsilon<1	$, there exists $N_{\epsilon}\in \mathbb{N}_{\ge2}$ such that for all $(f, s)\in F$ and for all $g\in P^G$ with ${\rm Val}(f, g)\ge N_{\epsilon},$ there exists $s^{\prime}\in B(s, \epsilon)$ such that $g(s^{\prime})=0.$
\end{lem}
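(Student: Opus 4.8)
The plan is to deduce this from Rouch$\acute{\rm e}$'s theorem (equivalently, Hurwitz's theorem) combined with a normal-family compactness argument, with the uniformity of $N_{R,\epsilon}$ forced by running the whole thing as a proof by contradiction. First I would record two elementary facts about $W$. Every $h=1+\sum_{i\ge1}a_iz^i\in W$ satisfies $|h(z)|\le 1+L|z|/(1-|z|)$ on $\mathbb{D}$, so $W$ is uniformly bounded on each compact subset of $\mathbb{D}$, hence a normal family by the theorem of Montel; moreover $W$ is closed under locally uniform limits, since such a limit is holomorphic on $\mathbb{D}$, has constant Taylor coefficient $\lim 1=1$, and has every other Taylor coefficient equal to a limit of numbers of modulus $\le L$. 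Secondly, if $f,g\in W$ with ${\rm Val}(f,g)\ge N$, then $f-g=\sum_{i\ge N}(a_i-b_i)z^i$ with $|a_i-b_i|\le 2L$, so $|f(z)-g(z)|\le 2L|z|^N/(1-|z|)$ on $\mathbb{D}$; in particular, as $N\to\infty$ the right-hand side tends to $0$ uniformly on every compact subset of $\mathbb{D}$.

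Then I would argue by contradiction. Suppose the statement fails for some $R,\epsilon$ with $R+\epsilon<1$. Then for each $k\in\mathbb{N}$ there is a pair $(f_k,s_k)\in F$ and a $g_k\in W$ with ${\rm Val}(f_k,g_k)\ge k$ such that $g_k$ has no zero in $B(s_k,\epsilon)$. Passing to a subsequence, using Montel on $W$ and Bolzano--Weierstrass on the compact set ${\rm cl}(B(0,R))$, I may assume $f_k\to f$ locally uniformly on $\mathbb{D}$ and $s_k\to s$ with $|s|\le R$. By the second fact, $g_k-f_k\to 0$ locally uniformly, hence $g_k\to f$ locally uniformly as well. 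Then $f\in W$ by closedness of $W$ (in particular $f(0)=1$, so $f\not\equiv 0$), and $f(s)=0$, because $|f(s)-f_k(s_k)|\le|f(s)-f(s_k)|+\sup_{|z|\le R}|f-f_k|\to 0$ while $f_k(s_k)=0$.

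Next I would localize around $s$. Since $f\not\equiv 0$, its zero at $s$ is isolated, so I can fix $r_0\in(0,\epsilon)$ with $f$ nonvanishing on $\{z\ |\ 0<|z-s|\le r_0\}$; note ${\rm cl}(B(s,r_0))\subset\mathbb{D}$, as $|z|\le R+\epsilon<1$ there. Put $m:=\min_{|z-s|=r_0}|f(z)|>0$. For $k$ large, the uniform convergence $g_k\to f$ on the compact circle $\{z\ |\ |z-s|=r_0\}$ gives $|g_k(z)-f(z)|<m\le|f(z)|$ there, so Rouch$\acute{\rm e}$'s theorem shows $g_k$ has at least one zero in $B(s,r_0)$. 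For $k$ large we also have $|s_k-s|<\epsilon-r_0$, hence $B(s,r_0)\subset B(s_k,\epsilon)$, so $g_k$ has a zero in $B(s_k,\epsilon)$ --- contradicting the choice of $g_k$. This completes the proof.

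The only genuinely delicate point --- and the reason the proof is organized as a contradiction rather than by writing down an explicit formula for $N_{R,\epsilon}$ --- is the uniformity over all of $F$: for a single pair $(f,s)$, a bare Rouch$\acute{\rm e}$ estimate with $N$ chosen in terms of $\min_{|z-s|=r_0}|f|$ and of how isolated the zero $s$ is already works, but making one $N_{R,\epsilon}$ serve the whole family requires the compactness of $W$ as a normal family together with the observation that ${\rm Val}(f_k,g_k)\to\infty$ pins $g_k$ to the same locally uniform limit as $f_k$. A secondary, minor point is verifying that the limit function $f$ is not identically zero, so that its zero at $s$ is isolated and Rouch$\acute{\rm e}$/Hurwitz applies; this is immediate from $f(0)=1$.
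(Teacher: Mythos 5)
Your proof is correct and rests on the same two pillars as the paper's: compactness of $W$ as a normal family (Montel) and Rouch\'e's theorem on a small circle about the limiting zero, the tail estimate $|f(z)-g(z)|\le 2L|z|^N/(1-|z|)$ controlling the perturbation. The only difference is packaging --- you extract uniformity of $N_{R,\epsilon}$ by contradiction and subsequence extraction, whereas the paper covers the compact set $F$ by finitely many neighborhoods $V_{f,s}$ and takes the maximum of the associated $N_{f,s}$; these are interchangeable uses of compactness and your version is, if anything, slightly cleaner.
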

Then the following holds.
\begin{pro}
$X^G$ is connected.
\end{pro}
\begin{proof}
Suppose that ${X}^G$ is not connected, that is, $X^G=D\cup E$, where $D$ and $E$ are non-empty disjoint open and closed sets. We assume that the annulus $\{z\in \mathbb{C}\ :\ L< |z|<1\}$, which is a connected set in $X^G$, is contained in $D$. Set \[\epsilon=\min\left\{\inf\{|x-y|\ :\ x\in D, y\in E\}, \frac{1-L}{2}\right\}.\] Since $E$ is not empty, take $z\in E.$ Then there exists $\gamma\in G^{\mathbb{N}}$ such that $f_{\gamma}(z)=0.$ We set $g(z)=1+\sum_{i=1}^\infty z^i$ for any $z\in \mathbb{D}$. Then ${g}\in P^G$ since $G$ contains $1$.

Since $f_{\gamma}\neq g$, by Lemma \ref{js}, there exists a sequence of functions $p_0, q_0, p_1, q_1, ..., p_{m}, q_{m}$ which joins $f_{\gamma}$ to $g$ with respect to $N_{\epsilon},$ where $N_{\epsilon}$ is defined by $\epsilon$ in Lemma \ref{sr}. Then by Definition \ref{join} (1), we have ${\rm Val}(p_0, q_0)\ge N_{ \epsilon}.$ Hence Lemma \ref{sr} implies there exists $z^{\prime} \in B(z, \epsilon)$ such that $q_0(z^{\prime})=0.$ If $z^{\prime}\in \{z\in \mathbb{C}\ :\ L< |z|<1\}(\subset D),$ this contradicts the definition of $\epsilon,$ and hence we have ${X}^G$ is connected. Otherwise, by Definition \ref{join} (2) we have $p_1(z^{\prime})=0.$ If we repeat this procedure, there exist $s\in E,$ and $t\in D$ such that $t\in B(s, \epsilon)$ since $g(=q_m)$ does not have any roots in $\mathbb{D}.$ However, this contradicts the definition of $\epsilon.$ Hence we have proved our theorem.
\end{proof}

\begin{lconne*}

\end{lconne*}

We apply the method in \cite[p. 1142 Our modification]{Ban1} to our planar setting. Let $\eta=(1+L)/2.$ Since $\eta<1,$ by imitating the proof of \cite[Proposition 2.1]{OP}, there is a uniform constant $M$ such that for any $f\in P^G$ has at most $M$ roots counted with multiplicity in the disk $B(0, \eta).$ 
Fix $\epsilon>0$ with $\epsilon<\eta-L$. 
For any finite word $u=u_1\cdots u_m\in G^{m},$ we define a cylinder set $C_u$ as \[C_u=\left\{\omega\in G^{\mathbb{N}}\ :\ \omega_i=u_i\ \text{for any}\ i=1,..., m\right\}.\]

Define a set valued map $\Psi$ on $G^{\mathbb{N}}$ by \[\Psi(\gamma)=\{z\in \mathbb{D}\ :\ f_{\gamma}(z)=0\}.\] In the case $\{z\in B(0, \eta)\ :\ f_{\gamma}(z)=0\}\neq \emptyset$ for $\gamma\in G^{\mathbb{N}},$ let $r^1_{\gamma},..., r^l_{\gamma}$ be roots in $B(0, \eta)$ of $f_{\gamma},$ where $r^j_{\gamma}$ is a zero of multiplicity $k_j.$ Note that $\sum_{j=1}^l k_j\le M,$ where $M$ is the uniform bound. Let $U_j$ be a neighborhood of $r^j_{\gamma}$ such that the diameter of $U_j$ is less than $\epsilon$ and $r^j_{\gamma}$ is the unique root of $f_{\gamma}$ on $U_j$ and let $U_{\ast}:=\{z\in \mathbb{D}\ :\ \eta-\epsilon<|z|\}.$ By Rouch\'e's Theorem, there exists an initial word $u=u(\gamma)=\gamma_1\cdots \gamma_m\in G^m$ such that for any $\delta\in C_u,$\[\Psi(\delta)\subset \bigcup_{j=1}^l U_j\cup U_{\ast}.\]

Then define a 
continuous set-valued map $\psi_j: C_{u}\rightarrow \{F\subset U_j\ :\  \sharp F \le k_j\}$ by $\psi_j(\delta)=\{z\in U_j\ :\ f_{\delta}(z)=0\}$ for each $U_j$, and also define a set-valued map $\psi_{\ast}$ on $C_{u}$ by $\psi_{\ast}(\delta)=\{z\in U_{\ast}\ : f_{\delta}(z)=0\}.$ Hence $\Psi$ is decomposed into $\psi_j$ and $\psi_{\ast},$ that is, 
\begin{align}
\label{spl1}
\Psi(\delta)=\bigcup_{j=1}^l \psi_{j}(\delta)\cup \psi_{\ast}(\delta)
\end{align}
 for $\delta\in C_u.$ In the case $\{z\in B(0, \eta)\ :\ f_{\gamma}(z)=0\}= \emptyset$ for $\gamma\in G^{\mathbb{N}},$ there exists an initial word $u=u(\gamma)=\gamma_1\cdots \gamma_m\in G^m$ such that for any $\delta\in C_u,$
\begin{align*}
\Psi(\delta)=\psi_{\ast}(\delta)\subset U_{\ast}.
\end{align*} 
Below, for any set-valued map $\psi$ on $C_{u}$ let \[\psi(C_u)=\bigcup_{\delta\in C_u} \psi(\delta)\] for ease of notation. We claim the following.

\begin{lem}
\label{kcomp}
$\psi_j(C_u)$ has at most $k_j$ connected components. 

\end{lem}
\begin{proof}
In this proof, we write $\psi, k$ for $\psi_j, k_j$. Assume that $\psi(C_u)$ splits into $k^{\prime}>k$ disjoint closed and open sets $D_1,..., D_{k^{\prime}}.$ For each set $K\subset \{1,..., k^{\prime}\}$ set \[C^K=\{\delta\in C_u\ :\ \psi(\delta)\cap D_i\neq \emptyset \ \text{for any}\ i\in K\}.\] Then at least two of the $C^K$ are non-empty, and the $C^K$ are disjoint closed and open sets in $C_u$. We now show this statement. We can assume that $k^{\prime}=k+1$ by setting $D_{k+1}=\cup_{i=k+1}^{k^{\prime}} D_i.$ Note that for all $K\subset \{1,..., k+1\},$ the sets $C^K$ are closed and open sets since the set valued function $\psi$ is continuous and the sets $D_i$ are closed and open. Furthermore, for any $i=1,..., k+1$ the set $C^{\{i\}}$ is non-empty by the assumption. Take $C^{\{1\}}$ and consider the set $C^{\{2,3,..., k+1\}}.$ If $C^{\{2,3,..., k+1\}}$ is non-empty, $C^{\{1\}}$ and $C^{\{2,3,..., k+1\}}$ satisfy the desired properties since $C^{\{1\}}\cap C^{\{2,3,..., k+1\}}=\emptyset$. In the case $C^{\{2,3,..., k+1\}}=\emptyset,$ we have $C^{\{2\}}\cap C^{\{3,..., k+1\}}=\emptyset.$ If $ C^{\{3,..., k+1\}}\neq\emptyset,$ $C^{\{2\}}$ and $C^{\{3,..., k+1\}}$ satisfy the desired properties. If $ C^{\{3,..., k+1\}}=\emptyset,$ we consider the sets $C^{\{3\}}$ and $C^{\{4,..., k+1\}}.$ This procedure eventually provides desired sets.

We consider such a $C^K$ which is the union of finitely many sub-cylinders $C_{u^1},..., C_{u^l}$ of $C_u.$ We assume that the words $u^1,..., u^l$ have a common minimal length $m$. Then by construction, $C_{u^j}\subset C^K$ for all $j=1,..., l$, where $u^j=u_1^j\cdots u_m^{j}$. We now prove for all $j=1,..., l$ and all $a\in G,$ 
\begin{align}
\label{hougan2}
C_{u_1^j\cdots u_{m-1}^{j}a}\subset C^K.
\end{align}
 Indeed, for $u^j_m$ and $a$ there exist $c_1,..., c_k$ satisfying the Definition \ref{graph1}. Then it follows \[ C_{u_1^j\cdots u_{m-1}^{j}c_2}\cap C^K\neq \emptyset\] from the fact $C_{u_1^j\cdots u_{m-1}^{j}c_1}\subset C^K$ and Lemma \ref{keylem}. Since $C^K$ is the union of cylinders of length $m$, we have \[C_{u_1^j\cdots u_{m-1}^{j}c_2}\subset C^K.\] If we repeat this procedure, we obtain (\ref{hougan2}), which implies for all $j=1,..., l,$
\[C_{u_1^j\cdots u_{m-1}^{j}}\subset C^K.\]
But this contradicts the minimality of $m$.
\end{proof}

Then the following holds.
\begin{pro}
$X^G$ is locally connected.
\end{pro}
\begin{proof}
Since $\{C_{u(\gamma)}\ :\ \gamma\in G^{\mathbb{N}}\}$ is an open covering of the compact space $G^{\mathbb{N}},$ there is a finite subcovering $C_{u(\gamma_1)},..., C_{u(\gamma_m)}$ such that \[X^G= \bigcup_{j=1}^m \Psi(C_{u(\gamma_j)}).\] By (\ref{spl1}), the set $\Psi(C_{u(\gamma_j)})$ satisfies \[\Psi(C_{u(\gamma_j)})=\bigcup_{i=1}^{l_j}\psi_{j, i}(C_{u(\gamma_j)})\cup \psi_{\ast}(C_{u(\gamma_j)})\] and $\bigcup_{i=1}^{l_j}\psi_{j, i}(C_{u(\gamma_j)})$ has at most $M$ compact connected components of diameter less than $\epsilon$ by above arguments. Since \[ \psi_{\ast}(C_{u(\gamma_j)})\subset \overline{U_{\ast}}:=\{z\in \mathbb{D}\ :\ \eta-\epsilon\le |z|\}\subset X^G,\] we have \[X^G=\bigcup_{j=1}^m \bigcup_{i=1}^{l_j}\psi_{j, i}(C_{u(\gamma_j)})\cup \overline{U_{\ast}}.\]Moreover, $\overline{U_{\ast}}\cap \{z\in \mathbb{D}\ :\ |z|\le L\}=\emptyset$ implies that

\[X^G\cap \{z\in \mathbb{D}\ :\ |z|\le L\}\subset \bigcup_{j=1}^m \bigcup_{i=1}^{l_j}\psi_{j, i}(C_{u(\gamma_j)}).\] Hence each point $z\in X^G\cap \{z\in \mathbb{C}\ :\ |z|\le L\}$ has a connected neighborhood of diameter less than $2\epsilon.$ Combining this and the fact $\{z\in \mathbb{D}\ :\ L<|z|\}\subset X^G$ we have $X^G$ is locally connected.

\end{proof}

\section{Proof of Main Theorem A}
In this section we give a proof of Main Theorem A. Set $I_n:=\{0,1,...,n-1\}$ for $n\ge 2.$ Define a set $\on$ of coefficients which corresponds to $\mn$ as 
\begin{align*}
&\Omega_n:=\left\{\frac{{\xi_n}^j-{\xi_n}^k}{1-{\xi_n}}\ :\ j,k\in I_n\right\}=\frac{\triangle H_n}{1-\xi_n},
\end{align*}
where
\[H_n:=\{{\xi_n}^j\ :\ j\in I_n\}.\]
Then the following two lemmas can be found in \cite{BanHu}.
\begin{lem}\em{\cite[Remark 3]{BanHu}}
\label{mnpo}For any $n\ge 2,$
\begin{align*}
\mn=X^{\on}.
\end{align*}
\end{lem}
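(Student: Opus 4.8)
The plan is to reduce both memberships $\la\in\mn$ and $\la\in X^{\on}$ to the single geometric condition that the two cyclically adjacent pieces $P_0$ and $P_1$ of the fractal $n$-gon meet, where $P_i:=\phi^{n,\la}_i\bigl(A_n(\la)\bigr)$ and all index arithmetic on $I=\{0,\dots,n-1\}$ is read modulo $n$. Throughout I would use the standard series description $A_n(\la)=\{\sum_{k\ge0}\la^k\xi_n^{a_k}\ |\ a_k\in I\}$ (convergent since $|\la|<1$), which gives $P_i=\{\sum_{k\ge0}\la^k\xi_n^{a_k}\ |\ a_0=i\}$ and $\bigcup_iP_i=A_n(\la)$.

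First I would record the algebraic dictionary. A common point of $P_0$ and $P_1$ is exactly a solution of $\sum_{k\ge0}\la^k(\xi_n^{b_k}-\xi_n^{c_k})=0$ with $b_0=0$, $c_0=1$ and all $b_k,c_k\in I$. Since the $k=0$ term equals $1-\xi_n\ne0$, dividing the relation by $1-\xi_n$ rewrites it as a power series $1+\sum_{k\ge1}\omega_kz^k\in P^{\on}$ vanishing at $z=\la$, with $\omega_k=(\xi_n^{b_k}-\xi_n^{c_k})/(1-\xi_n)\in\on$; conversely, given such a vanishing series in $P^{\on}$, writing its coefficients as $(\xi_n^{p_k}-\xi_n^{q_k})/(1-\xi_n)$ and multiplying back through by $1-\xi_n=\xi_n^0-\xi_n^1$ exhibits the two coinciding points $\xi_n^0+\sum_{k\ge1}\la^k\xi_n^{p_k}\in P_0$ and $\xi_n^1+\sum_{k\ge1}\la^k\xi_n^{q_k}\in P_1$. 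Hence $P_0\cap P_1\ne\emptyset$ if and only if $\la\in X^{\on}$ (and $\la\ne0$ is automatic, consistent with $\mn\subset\mathbb{D}^{\times}$).

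Next come the rotational symmetry and Hata's criterion. Relabelling digits by $a_k\mapsto a_k+1$ shows $\xi_n A_n(\la)=A_n(\la)$, and more precisely $\xi_n P_i=P_{i+1}$; consequently the incidence graph $H_\la$ on $I\cong\mathbb{Z}/n\mathbb{Z}$ (with edge $\{i,j\}$ precisely when $P_i\cap P_j\ne\emptyset$) is invariant under the cyclic shift, and by Hata's connectedness criterion $\la\in\mn$ if and only if $H_\la$ is connected. This already yields $X^{\on}\subset\mn$: if $P_0\cap P_1\ne\emptyset$ then, multiplying by $\xi_n^i$, we get $P_i\cap P_{i+1}\ne\emptyset$ for every $i$, so $H_\la$ contains the spanning $n$-cycle and is connected.

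The remaining inclusion $\mn\subset X^{\on}$ is the heart of the matter: one must show that a connected, cyclic-shift-invariant graph $H_\la$ necessarily contains the neighbour edge $\{0,1\}$, which is false for abstract circulant graphs and so has to be forced geometrically. Since $A_n(\la)\subset\overline{B(0,1/(1-|\la|))}$, each piece satisfies $P_i\subset\overline{B\bigl(\xi_n^i,|\la|/(1-|\la|)\bigr)}$, and for $j-i\not\equiv0,\pm1$ one has $|\xi_n^i-\xi_n^j|\ge2\sin(2\pi/n)$; hence, as long as $|\la|/(1-|\la|)<\sin(2\pi/n)$, the graph $H_\la$ is a subgraph of the $n$-cycle, and connectedness forces it to be the entire $n$-cycle, so $\{0,1\}\in H_\la$. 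For the remaining (larger) parameters the neighbour pieces are shown to meet directly: for $|\la|>1/\sqrt n$ this is essentially the content of $\{1/\sqrt n<|z|<1\}\subset\mn$ (\cite[Proposition 3]{BanHu}), and the bounded intermediate band of parameters that survives for large $n$ is handled by a direct estimate on the difference set $A_n(\la)-A_n(\la)$ (equivalently, by exhibiting an explicit zero of a series in $P^{\on}$). With $P_0\cap P_1\ne\emptyset$ in hand, the dictionary of the second paragraph gives $\la\in X^{\on}$, completing the proof. The step I expect to be the main obstacle is exactly this geometric control of $H_\la$ — making the implication ``$A_n(\la)$ connected $\Rightarrow$ adjacent pieces touch'' watertight across the full range of $\la$.
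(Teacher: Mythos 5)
The paper offers no proof of this lemma — it is cited from \cite[Remark 3]{BanHu} — so there is no internal argument to compare against; taken on its own terms, the first half of your proposal is sound. Your dictionary identifying $X^{\on}$ with the set of parameters $\la$ for which the adjacent first-level pieces $P_0=\phi^{n,\la}_0(A_n(\la))$ and $P_1=\phi^{n,\la}_1(A_n(\la))$ intersect is correct (divide the equality of two digit expansions with $a_0=0,\,b_0=1$ by $1-\xi_n$), and the inclusion $X^{\on}\subset\mn$ follows cleanly: rotational invariance $\xi_nP_i=P_{i+1}$ propagates $P_0\cap P_1\ne\emptyset$ to every neighbour pair, so the incidence graph contains the spanning $n$-cycle and Hata's criterion gives connectedness of $A_n(\la)$.

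The gap is in the reverse inclusion, exactly where you say you expect trouble, and your sketch does not close it. Your small-parameter estimate forces the incidence graph into the $n$-cycle only when $|\la|/(1-|\la|)<\sin(2\pi/n)$, i.e. $|\la|<\sin(2\pi/n)/\bigl(1+\sin(2\pi/n)\bigr)$, while the annulus result \cite[Proposition 3]{BanHu} covers $|\la|>1/\sqrt n$. These two ranges overlap precisely when $(\sqrt n-1)\sin(2\pi/n)\ge1$, which holds for $4\le n\le 24$ but fails for every $n\ge 25$; for large $n$ the uncovered band is roughly $2\pi/n<|\la|<1/\sqrt n$ and its width grows, so this is not a negligible edge case. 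Your closing phrase — ``handled by a direct estimate on the difference set $A_n(\la)-A_n(\la)$'' — is not an argument but a restatement of what must be proved: one would need to show, for instance, that $P_0\cap P_d\ne\emptyset$ for some $2\le d\le n/2$ already forces $P_0\cap P_1\ne\emptyset$, and nothing in the sketch supplies such a mechanism (the abstract circulant graph can be connected via $d\ge 2$ alone, as you note yourself). A secondary point: invoking the \emph{statement} of \cite[Proposition 3]{BanHu} only yields $\la\in\mn$, which is your hypothesis, not your goal; you need that its \emph{proof} produces an actual common point of $P_0$ and $P_1$ (equivalently a vanishing series in $P^{\on}$). You flag this parenthetically, but it should be made explicit, since otherwise the appeal is circular.
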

\begin{lem}\em{\cite[Proposition 3]{BanHu}}
\label{rcontain}
For any $n\ge 2,$
\begin{align*}
\left\{z\in \mathbb{C}\ :\ \frac{1}{\sqrt{n}}< |z|<1\right\}\subset \mn. 
\end{align*}
\end{lem}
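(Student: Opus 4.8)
The plan is to reduce the connectedness of $A_n(\lambda)$ to a membership statement about its difference set, and then to verify that statement by an explicit covering argument, the hypothesis $1/\sqrt n<|\lambda|<1$ entering only at the last step. Write $A:=A_n(\lambda)$ and $\phi_i:=\phi^{n,\lambda}_i$. First I would invoke a standard connectedness criterion for attractors of iterated function systems (due to Hata): the self-similar set $A=\bigcup_{i=0}^{n-1}\phi_i(A)$ is connected exactly when the graph on $\{0,\dots,n-1\}$ joining $i$ to $j$ whenever $\phi_i(A)\cap\phi_j(A)\neq\emptyset$ is connected. Since the whole system is invariant under $z\mapsto\xi_n z$ (so $\xi_n A=A$ and $\xi_n\phi_i(A)=\phi_{i+1}(A)$), the relation $\phi_i(A)\cap\phi_j(A)\neq\emptyset$ depends only on $i-j$ modulo $n$, so this graph is a circulant graph on $\mathbb{Z}/n\mathbb{Z}$ and is connected as soon as $\phi_0(A)\cap\phi_1(A)\neq\emptyset$. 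Hence it suffices to prove $\phi_0(A)\cap\phi_1(A)\neq\emptyset$.

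Next I would pass to the difference set. The equation $\phi_0(a)=\phi_1(b)$ reads $\lambda(a-b)=\xi_n-1$, so $\phi_0(A)\cap\phi_1(A)\neq\emptyset$ is equivalent to $(\xi_n-1)/\lambda\in A-A$, where $A-A:=\{a-b\mid a,b\in A\}$. From $A=\bigcup_i(\lambda A+\xi_n^i)$ one reads off $A-A=\bigcup_{i,j}\bigl(\lambda(A-A)+(\xi_n^i-\xi_n^j)\bigr)$, so $A-A$ is precisely the attractor of the difference IFS $\{z\mapsto\lambda z+c\mid c\in\Delta\}$ whose digit set $\Delta:=\{\xi_n^i-\xi_n^j\mid i,j\in I\}$ consists of $0$ together with the vertices of the concentric regular $n$-gons of circumradii $2\sin(m\pi/n)$, $m=1,\dots,n-1$. (After dividing by $1-\xi_n$ this is the statement, in the paper's notation, that $-1/\lambda$ lies in the attractor of $\{z\mapsto\lambda z+\omega\mid\omega\in\Omega_n\}$, i.e. $\lambda\in X^{\Omega_n}=\mathcal{M}_n$, which is reassuring but circular.)

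The crux is to show that $A-A$ contains the closed disk ${\rm cl}(B(0,\rho))$ with $\rho:=2\sin(\pi/n)/|\lambda|$; since $|(\xi_n-1)/\lambda|=2\sin(\pi/n)/|\lambda|=\rho$, this finishes the proof. For this I would verify the self-covering inclusion
\[
{\rm cl}(B(0,\rho))\subseteq\bigcup_{c\in\Delta}{\rm cl}\bigl(B(c,|\lambda|\rho)\bigr).
\]
Writing $F(X):=\bigcup_{c\in\Delta}(\lambda X+c)$, this says $F({\rm cl}(B(0,\rho)))\supseteq{\rm cl}(B(0,\rho))$, so by monotonicity the iterates $F^k({\rm cl}(B(0,\rho)))$ form an increasing sequence; since $F^k$ of any non-empty compact set converges in Hausdorff distance to the attractor $A-A$, we conclude ${\rm cl}(B(0,\rho))\subseteq A-A$. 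The inclusion itself is a purely Euclidean fact about $\Delta$: the covering radius is $|\lambda|\rho=2\sin(\pi/n)$, consecutive rings of $\Delta$ have circumradii differing by less than $2\sin(\pi/n)$, and the vertices within each ring are at mutual distances controlled by $\sin(\pi/n)$, so the disks ${\rm cl}(B(c,2\sin(\pi/n)))$ chain outward from $0\in\Delta$ through ring $1$, ring $2$, $\dots$, and cover ${\rm cl}(B(0,r))$ for every $r\le R_{\mathrm{last}}+2\sin(\pi/n)$, where $R_{\mathrm{last}}$ is the circumradius of the last ring used. Since ring $m$ has circumradius $2\sin(m\pi/n)$ with $m$ allowed up to $\lfloor n/2\rfloor$, the hypothesis $|\lambda|>1/\sqrt n$ is (comfortably, for all but the smallest $n$) enough to make some ring reach past $\rho=2\sin(\pi/n)/|\lambda|$.

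The step I expect to be the main obstacle is exactly this last geometric estimate: one must make the chaining of the disks ${\rm cl}(B(c,2\sin(\pi/n)))$ through the rings of $\Delta$ quantitative and uniform in $n$, being careful about the angular offset $\pi/n$ between consecutive rings so that the gaps between rings are genuinely covered and not merely their vertices. A small number of exceptional $n$ then have to be treated by hand; most notably $n=2$, where $\Delta=\{0,\pm 2\}$ is real and no round disk can lie in $A-A$, so the case $n=2$ must instead be taken from the classical results of Bousch and of Odlyzko--Poonen on zeros of power series with coefficients in $\{0,1,-1\}$, which already furnish an annulus $\{R<|z|<1\}\subset\mathcal M_2$.
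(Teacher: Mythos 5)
The paper does not actually prove this lemma: it cites it directly from Bandt and Hung as their Proposition~3, so there is no ``paper's own proof'' to compare against, and your argument is a from-scratch alternative. Your reduction is structurally sound. Hata's criterion plus the rotational symmetry $\xi_n\phi_i(A)=\phi_{i+1}(A)$ (so that $\phi_0(A)\cap\phi_1(A)\neq\emptyset$ forces the whole $n$-cycle of intersections) correctly reduces connectedness to $(\xi_n-1)/\lambda\in A-A$, and the monotone-iteration step (if $X$ is compact and $X\subseteq F(X)$ for the contracting Hutchinson operator $F$ of the difference IFS, then $X$ is contained in the attractor $A-A$) is correct.

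The genuine gap is the one you flag: the self-covering inclusion
\begin{align*}
{\rm cl}\bigl(B(0,\rho)\bigr)\subseteq\bigcup_{c\in\Delta}{\rm cl}\bigl(B(c,2\sin(\pi/n))\bigr),\qquad \rho=\frac{2\sin(\pi/n)}{|\lambda|},\ \ \frac{1}{\sqrt n}<|\lambda|<1,
\end{align*}
is asserted but not proved, and the heuristic you give for it is not quite right. The radial gap between consecutive rings of $\Delta$ is $2\sin((m+1)\pi/n)-2\sin(m\pi/n)=4\cos\bigl((2m+1)\pi/(2n)\bigr)\sin(\pi/(2n))$, which can exceed $2\sin(\pi/n)$ (the ratio against $2\sin(\pi/n)$ can be as large as $1/\cos(\pi/(2n))>1$), so the rings do not chain radially on their own; one must exploit the angular placement, and for even $n$ each ring carries only $n$ (not $2n$) points, so the arc spacing on the outermost rings is about $4\pi/n$, i.e.\ roughly $2\cdot 2\sin(\pi/n)$, which is exactly the threshold where two disks of radius $2\sin(\pi/n)$ stop overlapping. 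The inclusion does appear to hold for $n\ge3$ (and is sharp at $|\lambda|=1/\sqrt n$, e.g.\ for $n=3$), but a uniform-in-$n$ proof is missing and is nontrivial. Your exclusion of $n=2$ is correct, though the stated reason is off: $A_2(\lambda)-A_2(\lambda)$ is not contained in $\mathbb{R}$ for non-real $\lambda$; rather, the self-covering inclusion itself fails because $\Delta_2=\{0,\pm2\}$ is collinear and ${\rm cl}(B(0,\rho))$ escapes $\bigcup_{c\in\Delta_2}{\rm cl}(B(c,2))$ in the direction orthogonal to $\mathbb{R}$ as soon as $\rho>2$. Finally, for context: the cited Bandt--Hung argument sidesteps all of this Euclidean geometry by using their dichotomy for fractal $n$-gons (either $A_n(\lambda)$ is connected or the pieces $\phi_i(A)$ are pairwise disjoint and $A_n(\lambda)$ is a Cantor set) together with the fact that pairwise disjoint pieces would force $\dim_H A_n(\lambda)=\log n/\log(1/|\lambda|)>2$ when $|\lambda|>1/\sqrt n$, which is impossible in the plane; that route is uniform in $n\ge2$ and yields the sharp constant with no case analysis.
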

We give a proof of Main Theorem A.
\begin{proof}[Proof of Main Theorem A]
By Main Theorem B, Lemmas \ref{mnpo} and \ref{rcontain}, it suffices to prove that the graph $(\on, R_{\on})$ is connected. 

We first prove that ${\xi_n}^i\in R_{\on}$ for any $i\in I_n.$ Since ${\xi_n}^i H_n=H_n,$ we have that ${\xi_n}^i \on=\on$. Moreover, $\on\subset \triangle \on$ since $\on$ contains $0.$ Hence we have that ${\xi_n}^i\in R_{\on}$. 

Let $j, k\in I_n.$ We have 
\begin{align*}
\frac{{\xi_n}^j-{\xi_n}^{k}}{1-{\xi_n}}-\frac{{\xi_n}^{j+1}-{\xi_n}^{k}}{1-{\xi_n}}=\frac{{\xi_n}^j-{\xi_n}^{j+1}}{1-{\xi_n}}={\xi_n}^j\in R_{\on},\notag \\
\frac{{\xi_n}^j-{\xi_n}^{k+1}}{1-{\xi_n}}-\frac{{\xi_n}^{j}-{\xi_n}^{k}}{1-{\xi_n}}=\frac{{\xi_n}^k-{\xi_n}^{k+1}}{1-{\xi_n}}={\xi_n}^k\in R_{\on}.
\end{align*}
Hence $(\on, R_{\on})$ has an edge between ${({\xi_n}^j-{\xi_n}^{k})}/{(1-{\xi_n})}$ and ${({\xi_n}^{j+1}-{\xi_n}^{k})}/{(1-{\xi_n})}$, and also an edge between ${({\xi_n}^j-{\xi_n}^{k})}/{(1-{\xi_n})}$ and ${({\xi_n}^{j}-{\xi_n}^{k+1})}/{(1-{\xi_n})}.$ This implies that the graph $(\on, R_{\on})$ is connected.
Hence we have proved our theorem.
\end{proof}

\end{document}